\documentclass[12pt,reqno]{amsart}
\usepackage{graphicx}
\usepackage{amssymb}

\textwidth=16.00cm
\textheight=22.00cm
\topmargin=0.00cm
\oddsidemargin=0.00cm
\evensidemargin=0.00cm
\headheight=14.4pt
\headsep=1cm
\numberwithin{equation}{section}
\hyphenation{semi-stable}
\emergencystretch=10pt

\vfuzz2pt 
\hfuzz2pt 
\newtheorem{thm}{Theorem}[section]
\newtheorem{cor}[thm]{Corollary}
\newtheorem{lem}[thm]{Lemma}
\newtheorem{prop}[thm]{Proposition}

\theoremstyle{definition}
\newtheorem{defn}[thm]{Definition}
\theoremstyle{remark}
\newtheorem{rem}[thm]{Remark}

\numberwithin{equation}{section}

\newcommand\Hom{\operatorname{Hom}}

\newcommand\Ext{\operatorname{Ext}}

\newcommand\height{\operatorname{height}}

\newcommand\Rad{\operatorname{Rad}}
\newcommand\Ann{\operatorname{Ann}}

\newcommand\Supp{\operatorname{Supp}}

\newcommand\Tor{\operatorname{Tor}}

\newcommand{\qism}{\stackrel{\sim}{\longrightarrow}}
\newcommand\grade{\operatorname{grade}}
\newcommand\depth{\operatorname{depth}}

\newcommand\im{\operatorname{im}}


\begin{document}

\title[Endomorphism Ring]{On Endomorphism Rings of Local Cohomology Modules}%
\author[W. Mahmood ]{Waqas Mahmood \quad }
\address{Abdus Salam School of Mathematical Sciences, GCU, Lahore Pakistan}%
\email{ waqassms$@$gmail.com}

\thanks{This research was partially supported by Higher Education Commission, Pakistan}
\subjclass[2000]{13D45.}
\keywords{Local cohomology, Endomorphism Ring, Local Duality, Cohomologically Complete Intersection}%

\maketitle
\begin{abstract} Let $I\subset R$ be an ideal of a complete Cohen-Macaulay local ring of dimension $n$. We will show that the natural homomorphism $R\to \Hom_{R}(H^c_{I}(K_R),H^c_{I}(K_R))$ is an isomorphism provided that $H^i_{I}(R)= 0$ for all $i\neq c= \grade (I)$ where $K_R$ (resp. $H^i_{I}(\cdot)$) denote the canonical module (resp. $i$-th local cohomology with respect to the ideal $I$) of $R$. The same result is true for the Matlis dual of $H^c_{I}(K_R)$. Moreover for a finitely generated module $M$ over an arbitrary local ring $R$ and the two ideals $J\subset I$ of $R$ such that $\grade(I,M)= \grade(J,M)= c$ we will construct a natural homomorphism $\Hom_{R}(H^c_{J}(M),H^c_{J}(M))\to \Hom_{R}(H^c_{I}(M),H^c_{I}(M))$. Also we will show when this natural  homomorphism is an isomorphism. These results extends those of Hellus and  St\"uckrad (resp. Schenzel )(see \cite{h1} resp. \cite{p7}) shown in the case of $K_R= R$ (resp. $M= R$).
\end{abstract}
\section{Introduction}
Let $(R,\mathfrak m,k)$ be a Noetherian local ring. For an ideal $I\subset R$ we denote $H^i_I(R)$, $i\in \mathbb{Z}$, the local cohomology module of $R$ with respect to $I$ (see \cite{b} and \cite{goth} for the definition). For an $R$-module $M$ consider
the natural homomorphism $R\to \Hom_{R}(M,M),r\to f_r$ where $f_r(m)= rm$ for all $m\in M$ and $r\in R$. In general, this map is neither injective nor surjective. Recently there is a lot of work on the study of the endomorphism ring $\Hom_{R}(H^c_{I}(R), H^c_{I}(R))$ for $c= \grade (I)$. For instance Hellus and  St\"uckrad (see \cite{h1}) have shown that if $R$ is a complete local ring and $H^i_I(R)= 0$ for all $i\neq c$ then the natural homomorphism
\[
 R\to \Hom_{R}(H^c_{I}(R), H^c_{I}(R))
\]
is an isomorphism. They also have proved that the same result is true for the Matlis dual of the module $H^c_{I}(R).$ In the case of a local Gorenstein ring Schenzel (see \cite{p6}) has investigated that the endomorphism ring $\Hom_{R}(H^c_{I}(R), H^c_{I}(R))$ is a commutative ring. Moreover Eghbali and Schenzel (see \cite{e}) have shown that if $R$ is a complete local ring of dimension $n$ then the endomorphism ring $\Hom_{R}(H^n_{I}(R), H^n_{I}(R))$ is a commutative semi-local ring and finitely generated as an $R$-module. Here we will extend some of these results to the case of a finitely generated $R$-module $M$ (resp. the canonical module). In particular we prove the following result:

\begin{thm}\label{}
Let $R$ be a complete local ring of dimension $n$ and $I\subseteq R$ be an ideal. For a finitely generated $R$-module $M$ let $\grade(I,M)= c$. Then the following are equivalent:
\begin{itemize}
  \item [(a)] The natural homomorphism
\[
R\to \Hom_{R}(H^c_{I}(M),H^c_{I}(M))
\]
is an isomorphism.

 \item [(b)]  The natural homomorphism
\[
R\to \Hom_{R}(D(H^c_{I}(M)),D(H^c_{I}(M)))
\]
is an isomorphism where $D(\cdot):= \Hom_R(\cdot, E_R(k))$.
\end{itemize}

\end{thm}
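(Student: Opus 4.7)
The plan is to construct a natural map relating the two endomorphism rings via Matlis duality, check that it intertwines the two natural maps from $R$, and analyze when it is bijective. Write $H = H^c_I(M)$ and $E = E_R(k)$, and let $\alpha\colon R \To \Hom_R(H, H)$ and $\beta\colon R \To \Hom_R(D(H), D(H))$ be the natural maps from (a) and (b). For $\phi \in \Hom_R(H, H)$, its Matlis dual $D(\phi) = \Hom_R(\phi, E)$ is an endomorphism of $D(H)$, and this defines a natural ring anti-homomorphism
\[
\Phi\colon \Hom_R(H, H) \To \Hom_R(D(H), D(H)), \qquad \phi \mapsto D(\phi).
\]
Since $D$ carries multiplication by $r \in R$ on $H$ to multiplication by $r$ on $D(H)$, we have $\beta = \Phi \circ \alpha$.

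The first observation is that $\Phi$ is always injective. Because $R$ is complete, the canonical evaluation map $\iota\colon H \To D(D(H))$ is injective, and naturality gives $\iota \circ \phi = D(D(\phi)) \circ \iota$ for every $\phi$. If $\Phi(\phi) = D(\phi) = 0$ then $D(D(\phi)) = 0$, whence $\phi = 0$. Granted this, (b) $\Rightarrow$ (a) follows quickly: for each $\phi \in \Hom_R(H, H)$, (b) gives an $r \in R$ with $\Phi(\phi) = \Phi(\alpha(r))$, and injectivity of $\Phi$ yields $\phi = \alpha(r)$; injectivity of $\alpha$ is automatic from $\beta = \Phi \circ \alpha$ being an isomorphism.

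For (a) $\Rightarrow$ (b) one should use the tensor-hom adjunction
\[
\Hom_R(D(H), D(H)) \cong \Hom_R(H, D(D(H))),
\]
under which $\Phi$ corresponds to post-composition with $\iota$. The short exact sequence $0 \To H \To D(D(H)) \To C \To 0$, with $C = \coker(\iota)$, then produces
\[
0 \To \Hom_R(H, H) \To \Hom_R(H, D(D(H))) \To \Hom_R(H, C),
\]
so surjectivity of $\Phi$ reduces to the vanishing $\Hom_R(H, C) = 0$. Under (a) we already know $\Hom_R(H, H) = R$, and the argument should combine this with the support condition $\Supp H \subseteq V(I)$ and a structural analysis of the Matlis biduality cokernel $C$ (which has no $I$-torsion piece accessible to $H$). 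The main obstacle is precisely the vanishing $\Hom_R(H, C) = 0$: this is the step that genuinely uses the grade hypothesis $\grade(I, M) = c$ and assumption (a), as opposed to the formal Matlis-duality bookkeeping that suffices for the easy direction.
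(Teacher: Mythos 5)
Your (b)~$\Rightarrow$~(a) direction is fine, but your (a)~$\Rightarrow$~(b) direction has a genuine gap that you yourself flag: you reduce surjectivity of $\Phi$ to the vanishing $\Hom_R(H,C)=0$, where $C=\coker\bigl(H\to D(D(H))\bigr)$, and then do not prove it. The remark that $C$ ``has no $I$-torsion piece accessible to $H$'' is an unsupported guess; $C$ is the Matlis biduality cokernel of a typically non-finitely-generated module, and there is no obvious structural reason for that vanishing. Moreover the reduction is only sufficient, not necessary, so even the framing is shaky: you would only need the image of $\Hom_R(H,D(D(H)))\to\Hom_R(H,C)$ to vanish, which is weaker, and it is unclear that hypothesis (a) is what drives either statement.

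The paper sidesteps all of this by proving something stronger and cleaner, namely the \emph{unconditional} isomorphism
\[
\Hom_R\bigl(H^c_I(M),H^c_I(M)\bigr)\;\cong\;\Hom_R\bigl(D(H^c_I(M)),D(H^c_I(M))\bigr),
\]
valid for any complete local $R$ and finitely generated $M$ with $\grade(I,M)=c$, regardless of whether (a) or (b) holds. The chain of isomorphisms is entirely formal: using Lemma \ref{4.5}(1) one identifies $\Hom_R(D(H),D(H))$ with $D(D(H)\otimes_R H)$; then Proposition \ref{31}(b) with $N=H$ (which has support in $V(I)$) gives $H\otimes_R D(H)\cong\Tor_c^R(H,D(M))$; another application of Lemma \ref{4.5}(1) plus Matlis biduality of the finitely generated module $M$ turns the dual of this into $\Ext^c_R(H,M)$; and Proposition \ref{31}(a) identifies that with $\Hom_R(H,H)$. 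Since each step is natural, the resulting isomorphism is compatible with the two structure maps from $R$, and the theorem follows immediately. So the student's route is not just incomplete — it targets a harder object (the biduality cokernel $C$) when the intended argument never needs to touch it. To repair the proposal you would need to replace the cokernel analysis with (or derive the needed vanishing from) the Ext/Tor adjunction machinery of Proposition \ref{31} and Lemma \ref{4.5}, at which point you have essentially reconstructed the paper's proof.
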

This generalizes the result originally proved by Schenzel (see \cite{p1}) in case of $M=R$. In order to construct these natural homomorphisms we adopt the techniques developed in \cite{p1}. Moreover if $M=R$ a complete Cohen-Macaulay ring and $I\subseteq R$ is a cohomologically complete intersection ideal (i.e. $H^i_I(R)= 0$ for all $i\neq c= \grade (I)$) then we are able to prove the following result:
\begin{thm}\label{a}
Let $(R,\mathfrak{m})$ be a complete Cohen-Macaulay ring of dimension $n$. Suppose that $H^i_I(R)= 0$ for all $i\neq c= \grade (I)$ where $I\subseteq R$ is an ideal. Then the following hold:
\begin{itemize}
  \item [(a)] The natural homomorphism
\[
R\to \Hom_{R}(H^c_{I}(K_R),H^c_{I}(K_R))
\]
is an isomorphism and for all $i\neq c$
\[
\Ext_{R}^{i-c}(H^c_{I}(K_R),H^c_{I}(K_R))= 0.
\]

\item [(b)] The natural homomorphism
\[
R\to \Hom_{R}(D(H^c_{I}(K_R)),D(H^c_{I}(K_R)))
\]
is an isomorphism and for all $i\neq c$
\[
\Ext_{R}^{i-c}(D(H^c_{I}(K_R)),D(H^c_{I}(K_R)))= 0.
\]

\item [(c)] The natural homomorphism
\[
H^c_{I}(K_R)\otimes_R D(H^c_{I}(K_R))\to E
\]
is an isomorphism and for all $i\neq c$
\[
\Tor_{i-c}^R(H^c_I(K_R), D(H^c_{I}(K_R)))= 0.
\]
\end{itemize}
\end{thm}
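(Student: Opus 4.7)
The plan is to reduce each of (a), (b), (c) to Schenzel's analogous result for $M=R$ in \cite{p7}, exploiting the central property $R\Hom_R(K_R,K_R)\simeq R$ of the canonical module of a complete Cohen--Macaulay ring. All three assertions are equivalent to derived-category isomorphisms, and I would work entirely in $D(R)$, modelling $R\Gamma_I(-)$ by a flat \v{C}ech complex $\check{C}^{\bullet}$ on a system of generators of $I$.

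First I would show that $H^i_I(K_R)=0$ for every $i\neq c$. Since $K_R$ is maximal Cohen--Macaulay, $\grade(I,K_R)=\grade(I,R)=c$, killing the local cohomology for $i<c$; for $i>c$ one uses $\cd(I,K_R)\leq \cd(I,R)=c$, valid for any finitely generated module. Hence
\[
R\Gamma_I(K_R)\simeq H^c_I(K_R)[-c] \text{ in } D(R),
\]
which reduces assertions (a), (b), (c) respectively to the derived-category identities $R\Hom_R(R\Gamma_I K_R, R\Gamma_I K_R)\simeq R$, $R\Hom_R(D(R\Gamma_I K_R),D(R\Gamma_I K_R))\simeq R$, and $R\Gamma_I(K_R)\otimes^L_R D(R\Gamma_I K_R)\simeq E$.

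For (a), the \v{C}ech model gives $R\Gamma_I(K_R)\simeq R\Gamma_I(R)\otimes^L_R K_R$, so by adjunction
\[
R\Hom_R(R\Gamma_I K_R, R\Gamma_I K_R)\simeq R\Hom_R\!\bigl(R\Gamma_I R,\,R\Hom_R(K_R, R\Gamma_I K_R)\bigr).
\]
The inner Hom is computed term-by-term on $\check{C}^{\bullet}\otimes_R K_R$: each component is a localization $(K_R)_f$, and since $K_R$ is finitely generated with $R\Hom_R(K_R,K_R)\simeq R$, we get $R\Hom_R(K_R,(K_R)_f)\simeq R_f$. Reassembling yields $R\Hom_R(K_R, R\Gamma_I K_R)\simeq R\Gamma_I(R)$, hence
\[
R\Hom_R(R\Gamma_I K_R, R\Gamma_I K_R)\simeq R\Hom_R(R\Gamma_I R, R\Gamma_I R)\simeq R,
\]
the last step being Schenzel's CCI theorem for $R$. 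A completely parallel \v{C}ech computation, using $D(K_R)\simeq H^n_{\mathfrak m}(R)$ (from local duality) together with the Tor-vanishing version of Schenzel's result for $R$, establishes (c) after the shifts in $R\Gamma_I$ cancel with those in its Matlis dual. Finally, (b) follows formally from (c) via the Matlis-adjunction identity
\[
R\Hom_R(D(A),D(A))\simeq D\!\bigl(A\otimes^L_R D(A)\bigr)
\]
applied to $A=H^c_I(K_R)$, combined with $D(E)\simeq R$ (as $R$ is complete).

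The main technical obstacle is justifying that the term-wise \v{C}ech computations assemble correctly into the claimed derived-category isomorphisms, given that $H^c_I(K_R)$ and its Matlis dual are generally not finitely generated. The decisive ingredient is the full derived isomorphism $R\Hom_R(K_R,K_R)\simeq R$ with all higher $\Ext$ vanishing, which allows $R\Hom_R(K_R,-)$ to commute with the flat complex $\check{C}^{\bullet}\otimes_R(-)$; once this compatibility is established, the proof truly reduces to the case $M=R$ treated in \cite{p7}.
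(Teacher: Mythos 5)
Your route is genuinely different from the paper's. The paper does not reduce to the case $M=R$: it applies $\Hom_R(-,E^{\cdot}_R(K_R))$ to the truncation complex of $K_R$ and shows that $X=\Hom_R(E^{\cdot}_R(K_R),E^{\cdot}_R(K_R))$ is a flat resolution of $R$ concentrated in non-positive degrees --- using exactly the two special features of the canonical module that you also isolate, namely $\Hom_R(K_R,K_R)\cong R$ with $\Ext^{k}_R(K_R,K_R)=0$ for $k\neq 0$, plus the diagonal shape $E^{i}_R(K_R)\cong\oplus_{\height{\mathfrak p}=i}E_R(R/\mathfrak p)$ --- and then reads off that $\Ext^{i}_R(H^c_I(K_R),K_R)$ is $R$ for $i=c$ and $0$ otherwise, converting this into (a) via Proposition \ref{31} and Lemma \ref{3a} and into (b), (c) via Lemmas \ref{3} and \ref{3a}. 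Your \v{C}ech/adjunction argument is a clean alternative, and the preliminary steps ($H^i_I(K_R)=0$ for $i\neq c$, the Hom-tensor adjunction, and $R\Hom_R(K_R,\check{C}^{\bullet}\otimes_R K_R)\simeq \check{C}^{\bullet}$ by flat base change for a finitely generated first argument) are all sound.

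The one step not covered by your citations is the final reduction. You need the full derived statement $R\Hom_R(R\Gamma_I R,R\Gamma_I R)\simeq R$, i.e.\ not only that $R\to\Hom_R(H^c_I(R),H^c_I(R))$ is an isomorphism (Hellus--St\"uckrad, valid for any complete local ring), but also that $\Ext^{j}_R(H^c_I(R),H^c_I(R))=0$ for all $j>0$. In the literature this higher vanishing is established for Gorenstein rings; here $R$ is only Cohen--Macaulay, so \cite{h1} and \cite{p7} do not literally supply it, and without it your computation of (a) (and the Tor-vanishing you invoke for (c)) does not close. The gap is fillable: by the generalized local duality of Lemma \ref{b1}, $\Tor^R_{c-j}(E,H^c_I(R))\cong H^{j}_I(E)$, which vanishes for $j\neq 0$ because $E$ is injective; hence $\Tor^R_{j}(H^c_I(R),E)=0$ for $j\neq c$, and Lemma \ref{3a} translates this into the required $\Ext$- and $\Tor$-vanishing for $H^c_I(R)$. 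You should make this argument (or an equivalent one) explicit; as written, ``Schenzel's CCI theorem for $R$'' is being used in a stronger form than the cited sources provide.
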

In the above Theorem \ref{a} $K_R$ denotes the canonical module of $R$ (see \cite{her} for its definition and basic results). Moreover for a finitely generated $R$-module $M$ and the two ideals $J\subset I$ of $R$ such that $\grade(I,M)= \grade(J,M)= c$ there is a natural homomorphism
\[
 \Hom_{R}(H^c_{J}(M),H^c_{J}(M))\to \Hom_{R}(H^c_{I}(M),H^c_{I}(M)).
\]
In the following result there is a necessary condition such that this natural homomorphism becomes an isomorphism.
\begin{thm}\label{}
Let $J\subset I$ be two ideals of $(R,\mathfrak{m})$ such that $\grade(I,M)= \grade(J,M)= c$ for a finitely generated $R$-module $M$. Then the following are true:
\begin{itemize}
  \item [(a)] Suppose that $\Rad IR_{\mathfrak{p}}= \Rad JR_{\mathfrak{p}}$ for all prime ideals ${\mathfrak{p}}\in V(J)\cap \Supp_R(M)$ with $\depth_{R_\mathfrak{p}} (M_{\mathfrak{p}})\leq c$. Then the natural homomorphism \[
 \Hom_{R}(H^c_{J}(M),H^c_{J}(M))\to \Hom_{R}(H^c_{I}(M),H^c_{I}(M))
\]
is an isomorphism.
\item [(b)] Assume in addition that $R$ is complete Cohen-Macaulay and $H^i_{J}(R)= 0$ for all $i\neq c.$ Suppose that for $M= R$ the ideals $J\subset I$ satisfy the assumption of $(a)$. Then the natural homomorphism
\[
R\to \Hom_{R}(H^c_{I}(K_R),H^c_{I}(K_R))
\]
is an isomorphism.
\end{itemize}
\end{thm}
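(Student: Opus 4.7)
The plan is to reduce both parts to showing $H^c_J(M) = H^c_I(M)$ under the hypothesis of (a), which makes the natural map the identity; then (b) follows by chaining (a) with Theorem~\ref{a}. For the reduction, start from the Grothendieck spectral sequence
\[
E_2^{p,q} = H^p_I(H^q_J(M)) \Rightarrow H^{p+q}_I(M).
\]
Since $H^q_J(M)=0$ for $q<c$, the total degree $c$ diagonal has a single non-zero slot at $(0,c)$ and all differentials touching $E_r^{0,c}$ hit zero groups, so $H^c_I(M) \cong \Gamma_I(H^c_J(M))$. Under this identification the natural homomorphism is the restriction of an endomorphism of $H^c_J(M)$ to its $I$-torsion submodule (well-defined because $\Gamma_I$ is a subfunctor), so it suffices to prove that $H^c_J(M)$ is $I$-torsion, i.e., $\Ass_R(H^c_J(M)) \subseteq V(I)$.

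The key lemma is
\[
\Ass_R(H^c_J(M)) \subseteq \{\mathfrak{p} \in V(J) \cap \Supp_R(M) : \depth_{R_\mathfrak{p}}(M_\mathfrak{p}) \leq c\}.
\]
Fix $\mathfrak{p}$ in the left-hand side. An $M$-regular sequence of length $c$ in $J$ remains regular after localizing at $\mathfrak{p}$, while non-vanishing of $(H^c_J(M))_\mathfrak{p} = H^c_{JR_\mathfrak{p}}(M_\mathfrak{p})$ forces $\grade(JR_\mathfrak{p},M_\mathfrak{p}) \leq c$, so $\grade(JR_\mathfrak{p},M_\mathfrak{p}) = c$. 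In $R_\mathfrak{p}$ the composition spectral sequence
\[
E_2^{p,q} = \Ext^p_{R_\mathfrak{p}}(k(\mathfrak{p}), \Ext^q_{R_\mathfrak{p}}(R_\mathfrak{p}/J^nR_\mathfrak{p}, M_\mathfrak{p})) \Rightarrow \Ext^{p+q}_{R_\mathfrak{p}}(k(\mathfrak{p}), M_\mathfrak{p})
\]
has $q<c$ rows vanishing, so on the diagonal $p+q=c$ only $(0,c)$ survives and all differentials through $E_r^{0,c}$ are trivial; this produces natural isomorphisms $\Hom_{R_\mathfrak{p}}(k(\mathfrak{p}), \Ext^c_{R_\mathfrak{p}}(R_\mathfrak{p}/J^nR_\mathfrak{p}, M_\mathfrak{p})) \cong \Ext^c_{R_\mathfrak{p}}(k(\mathfrak{p}), M_\mathfrak{p})$, uniform in $n$. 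Taking the direct limit in $n$ (which commutes with $\Hom_{R_\mathfrak{p}}(k(\mathfrak{p}), -)$ by finite presentation of $k(\mathfrak{p})$) yields $\Hom_{R_\mathfrak{p}}(k(\mathfrak{p}), H^c_{JR_\mathfrak{p}}(M_\mathfrak{p})) \cong \Ext^c_{R_\mathfrak{p}}(k(\mathfrak{p}), M_\mathfrak{p})$, so non-vanishing of the left side (forced by $\mathfrak{p}$ being associated) implies $\depth_{R_\mathfrak{p}}(M_\mathfrak{p}) \leq c$.

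With the lemma in hand, the hypothesis of (a) gives $IR_\mathfrak{p} \subseteq \Rad IR_\mathfrak{p} = \Rad JR_\mathfrak{p} \subseteq \mathfrak{p} R_\mathfrak{p}$ for every $\mathfrak{p} \in \Ass(H^c_J(M))$, whence $\Ass(H^c_J(M)) \subseteq V(I)$. Since $H^c_J(M)/\Gamma_I(H^c_J(M))$ is $I$-torsion-free, any of its associated primes lies outside $V(I)$ and, by localizing, is also associated to $H^c_J(M)$; the containment above therefore forces $H^c_J(M)/\Gamma_I(H^c_J(M)) = 0$, proving $H^c_J(M) = H^c_I(M)$ and finishing (a). For (b), apply (a) with $M = K_R$: the Cohen-Macaulay hypothesis gives $\Ass_R(K_R)$ equal to the set of minimal primes of $R$, so $\grade(I,K_R)=\grade(I,R)=c$ and likewise for $J$; moreover $\Supp_R(K_R)=\Spec R$ and $(K_R)_\mathfrak{p} = K_{R_\mathfrak{p}}$ is maximal Cohen-Macaulay, whence $\depth_{R_\mathfrak{p}}(K_R)_\mathfrak{p} = \dim R_\mathfrak{p} = \depth_{R_\mathfrak{p}}(R_\mathfrak{p})$. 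The radical hypothesis therefore transfers verbatim from $R$ to $K_R$, and (a) delivers $\Hom_R(H^c_J(K_R),H^c_J(K_R)) \cong \Hom_R(H^c_I(K_R),H^c_I(K_R))$. Combined with $R \cong \Hom_R(H^c_J(K_R),H^c_J(K_R))$ from Theorem~\ref{a}(a), the composite equals the natural multiplication map $R \to \Hom_R(H^c_I(K_R),H^c_I(K_R))$, which is therefore an isomorphism.

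The main obstacle is the depth bound on $\Ass(H^c_J(M))$: the argument hinges on the grade being preserved after localizing at a prime in $\Supp(H^c_J(M))$, because this is precisely what makes the composition spectral sequence degenerate on the diagonal $p+q=c$; without such preservation the spectral sequence could have several non-zero contributions to $\Ext^c_{R_\mathfrak{p}}(k(\mathfrak{p}),M_\mathfrak{p})$ on that diagonal and the clean identification with $\Hom_{R_\mathfrak{p}}(k(\mathfrak{p}),H^c_{JR_\mathfrak{p}}(M_\mathfrak{p}))$ would break down, so this is the step demanding the most careful bookkeeping.
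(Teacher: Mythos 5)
Your proof is correct, but for part (a) it takes a genuinely different route from the paper. The paper starts from the short exact sequences $0\to I^s/J^s\to R/J^s\to R/I^s\to 0$, applies $\Hom_R(\cdot,M)$, passes to the direct limit to get an injection $H^c_I(M)\hookrightarrow H^c_J(M)$ with cokernel inside $\varinjlim \Ext^c_R(I^s/J^s,M)$, and then kills that cokernel by proving $\grade(J^s:_RI^s,M)\ge c+1$ under the radical hypothesis (via a contradiction at a prime of small depth and the grade--depth formula of Bruns--Herzog). You instead degenerate the Grothendieck spectral sequence $H^p_I(H^q_J(M))\Rightarrow H^{p+q}_I(M)$ to identify $H^c_I(M)\cong\Gamma_I(H^c_J(M))$, and then prove the structural inclusion
\[
\Ass_R\bigl(H^c_J(M)\bigr)\subseteq\bigl\{\mathfrak p\in V(J)\cap\Supp_R(M):\depth_{R_{\mathfrak p}}(M_{\mathfrak p})\le c\bigr\}
\]
(which, incidentally, is just Proposition~\ref{31}(a) localized at $\mathfrak p$ and applied to $N=k(\mathfrak p)$, so you could skip the local composition spectral sequence and cite it directly). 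The radical hypothesis then forces $\Ass(H^c_J(M))\subseteq V(I)$, i.e.\ $H^c_J(M)$ is already $I$-torsion, and equality $H^c_I(M)=H^c_J(M)$ drops out. Both methods exploit the same grade--depth formula and the same radical-indifference of local cohomology; the paper keeps the bookkeeping at the level of the $\Ext$-colimit, while your version isolates the cleaner invariant statement about associated primes and lets the subfunctor $\Gamma_I$ do the rest. Your approach even highlights that under the hypothesis the modules coincide on the nose rather than merely that a certain map of endomorphism rings is bijective. For part (b), both you and the paper chain the $J$-to-$I$ isomorphism with $R\cong\Hom_R(H^c_J(K_R),H^c_J(K_R))$ from Theorem~\ref{a}(a); you spell out, more carefully than the paper's Remark~\ref{r11} does, that the depth condition stated for $M=R$ transfers to $M=K_R$ because $K_R$ is maximal Cohen--Macaulay with full support, so $\depth_{R_{\mathfrak p}}(K_R)_{\mathfrak p}=\depth R_{\mathfrak p}$ for every $\mathfrak p$.
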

This generalizes the results of Schenzel (see \cite[Theorem 1.2]{p7}) to the case of a finitely generated $R$-modules (resp. the canonical module). Moreover corresponding results are true for $D(H^c_{I}(M))$ (resp. for $D(H^c_{I}(K_R))$) instead of $H^c_{I}(M)$ (resp. $K_R$).

\section{Preliminaries}
In this section we will fix the notation of the paper and summarize a few preliminaries and auxiliary results. Let $(R,\mathfrak m)$ be a commutative Noetherian local ring with $\mathfrak m$ the maximal ideal and $k= R/{\mathfrak m}$ the residue field. Furthermore we will denote the Matlis dual functor by $D(\cdot):= \Hom_R(\cdot, E)$ where $E= E_R(k)$ is the injective hull of $k$.

Here we will give a criterion of calculating grade of any ideal which is completely encoded in terms of vanishing of the Tor modules. Before this we need the following Proposition. Note that this was actually proved by Schenzel (see \cite[Thoerem 2.3]{p1}) for $M= R$. For the definition of grade and its properties we refer to \cite{her}.
\begin{prop}\label{31}
Let $I$ be an ideal of a ring $(R,\mathfrak m)$ and $M$ be a finitely generated $R$-module such that $c= \grade(I, M)$. Suppose that $N$ is any $R$-module with $\Supp_R(N)\subseteq V(I)$ then the following holds:
\begin{itemize}
  \item [(a)] There is an isomorphism
\[
\Ext^{c}_R(N, M)\cong \Hom_R(N, H^c_I(M))
\]
and for all $i< c$
\[
\Ext^{i}_R(N, M)=0.
\]
  \item [(b)] There is an isomorphism
  \[
\Tor_{c}^R(N, D(M))\cong N\otimes_R D(H^c_I(M))
\]
and for all $i< c$
\[
\Tor_{i}^R(N, D(M))= 0.
\]
\end{itemize}
\end{prop}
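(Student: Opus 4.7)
My plan is to prove (a) and (b) by the same method, namely, by isolating the $I$-torsion part of a minimal injective resolution of $M$ and then feeding the result into standard hyper(co)homology spectral sequences. First, I would take a minimal injective resolution $M \to J^\bullet$. Because every indecomposable summand $E_R(R/\mathfrak{p})$ of $J^i$ is an $R_\mathfrak{p}$-module, the subcomplex $\Gamma_I(J^\bullet)$ is precisely the sum of those summands with $\mathfrak{p} \supseteq I$, and it represents $R\Gamma_I(M)$. This gives a short exact sequence of complexes
\[
0 \to \Gamma_I(J^\bullet) \to J^\bullet \to J^\bullet/\Gamma_I(J^\bullet) \to 0,
\]
in which every term of the quotient is a direct sum of $E_R(R/\mathfrak{p})$ with $\mathfrak{p} \not\supseteq I$.

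The key module-level vanishings I will need, for any prime $\mathfrak{p}$ with $\mathfrak{p} \not\supseteq I$, are
\[
\Hom_R(N, E_R(R/\mathfrak{p})) = 0 \quad \text{and} \quad \Tor_j^R(N, D(E_R(R/\mathfrak{p}))) = 0 \text{ for all } j \geq 0.
\]
Both rest on the same observation: the hypothesis $\Supp_R(N) \subseteq V(I)$ forces every cyclic submodule of $N$ to be annihilated by a power of $I$, and for any $x \in I \setminus \mathfrak{p}$, multiplication by $x$ is invertible on the $R_\mathfrak{p}$-module $E_R(R/\mathfrak{p})$, and by functoriality of $D$ also on $D(E_R(R/\mathfrak{p}))$. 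Simultaneous $x$-torsion on $N$ and $x$-invertibility on $E_R(R/\mathfrak{p})$ (respectively on its Matlis dual) force the Hom (respectively each Tor) to vanish. For the Tor case, I would first reduce to finitely generated $N$ by writing $N$ as a filtered colimit of its finitely generated $I$-power torsion submodules, so that $\Tor$ commutes with the direct product $D(\bigoplus E_R(R/\mathfrak{p})) = \prod D(E_R(R/\mathfrak{p}))$.

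With these vanishings in hand, applying $\Hom_R(N,-)$ termwise to the short exact sequence kills the quotient, so $\Hom_R(N, \Gamma_I(J^\bullet)) \simeq \Hom_R(N, J^\bullet)$ as complexes, hence $R\Hom_R(N, M) \simeq R\Hom_R(N, R\Gamma_I(M))$ in the derived category. The standard spectral sequence
\[
E_2^{p,q} = \Ext_R^p(N, H^q_I(M)) \Rightarrow \Ext_R^{p+q}(N, M)
\]
has $E_2^{p,q} = 0$ for $q < c$ since $\grade(I, M) = c$, which immediately yields $\Ext_R^i(N, M) = 0$ for $i < c$ and $\Ext_R^c(N, M) \cong E_2^{0,c} = \Hom_R(N, H^c_I(M))$, proving (a). Applying $N \otimes^L_R D(-)$ to the dualized short exact sequence instead, the Tor vanishings produce $N \otimes^L_R D(M) \simeq N \otimes^L_R D(R\Gamma_I(M))$, and the analogous hyper-Tor spectral sequence
\[
E^2_{p,q} = \Tor_p^R(N, D(H^q_I(M))) \Rightarrow \Tor_{p+q}^R(N, D(M)),
\]
nonzero only for $q \geq c$, yields $\Tor_i^R(N, D(M)) = 0$ for $i < c$ and $\Tor_c^R(N, D(M)) \cong N \otimes_R D(H^c_I(M))$, proving (b).

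The main obstacle is the Tor vanishing $\Tor_j^R(N, D(E_R(R/\mathfrak{p}))) = 0$ for $\mathfrak{p} \not\supseteq I$: unlike the Hom statement, this is not purely element-wise, and requires both the reduction to finitely generated $N$ and the interchange of $\Tor$ with the direct product into which $D$ converts the relevant direct sum. Once this ingredient is in place, the rest of the argument is a formal manipulation of the two hyper(co)homology spectral sequences.
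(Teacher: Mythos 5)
Your argument is correct, and it is essentially the argument the paper intends: the paper gives no proof of Proposition \ref{31} beyond citing Schenzel's \cite[Theorem 2.3]{p1} for $M=R$, and that proof proceeds exactly as you do, by splitting the minimal injective resolution along its $I$-torsion part and using the vanishing of $\Hom_R(N,E_R(R/\mathfrak p))$ and of $\Tor^R_j(N,D(E_R(R/\mathfrak p)))$ for $\mathfrak p\notin V(I)$. Your handling of the only delicate point --- reducing to finitely generated $N$ so that $\Tor$ commutes with the direct products produced by $D$ --- is the right fix, so the proposal stands as a complete proof of the general case.
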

\begin{proof}
One can prove it easily by following the same steps as Schenzel used in his paper (see \cite[Thoerem 2.3]{p1}).
\end{proof}
\begin{lem}\label{4.5}
For a local ring $R$ let $M,N$ be any $R$-modules. Then for all $i\in \mathbb{Z}$ the following hold:
\begin{itemize}
\item[(1)] $\Ext^{i}_R(N,D(M))\cong D(\Tor_{i}^R(N, M))$.

\item[(2)] If $N$ is finitely generated then
\[
D(\Ext^{i}_R(N,M))\cong \Tor_{i}^R(N, D(M)).
\]
\end{itemize}
\end{lem}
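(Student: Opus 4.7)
The plan is to reduce both statements to standard Hom-tensor adjointness combined with the injectivity of $E = E_R(k)$, by resolving $N$ by free modules.

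For part (1), I would begin by taking a free resolution $F_\bullet \to N$ (with no finiteness hypothesis needed). The module $\Tor_i^R(N,M)$ is then computed as $H_i(F_\bullet \otimes_R M)$. Since $E$ is injective, the functor $D = \Hom_R(-,E)$ is exact, so it commutes with the formation of homology; hence
\[
D(\Tor_i^R(N,M)) \cong H^i\bigl(\Hom_R(F_\bullet \otimes_R M, E)\bigr).
\]
The classical Hom-tensor adjunction
\[
\Hom_R(F_\bullet \otimes_R M, E) \cong \Hom_R(F_\bullet, \Hom_R(M,E)) = \Hom_R(F_\bullet, D(M))
\]
is a natural isomorphism of complexes, whose cohomology in degree $i$ is precisely $\Ext^i_R(N, D(M))$. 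Chaining these isomorphisms yields the claim.

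For part (2), since $N$ is finitely generated I would take $F_\bullet \to N$ to be a resolution by \emph{finitely generated} free $R$-modules. Then $\Ext^i_R(N,M) = H^i(\Hom_R(F_\bullet, M))$, and injectivity of $E$ again allows $D$ to be pushed inside cohomology:
\[
D(\Ext^i_R(N,M)) \cong H_i\bigl(\Hom_R(\Hom_R(F_\bullet, M), E)\bigr).
\]
The key point is the natural isomorphism $\Hom_R(\Hom_R(F,M), E) \cong F \otimes_R D(M)$, valid whenever $F$ is finitely generated free (for $F = R^n$ both sides reduce to $D(M)^n$, and naturality extends this to $F_\bullet$). Substituting gives $H_i(F_\bullet \otimes_R D(M)) = \Tor_i^R(N, D(M))$, as desired.

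The only subtle point is the finite generation hypothesis in (2): without it, the isomorphism $\Hom_R(\Hom_R(F,M), E) \cong F \otimes_R D(M)$ fails for a general free $F$, so one genuinely needs $F_\bullet$ to consist of finitely generated modules. This is also what distinguishes (2) from (1), and it is the only real decision to make in the argument — everything else is a formal manipulation of resolutions, adjunction, and exactness of $D$. I expect no substantial obstacle beyond carefully tracking naturality so that the chain of isomorphisms is genuinely a composite of isomorphisms of complexes (not merely of objects), which is what licenses passing to (co)homology at the end.
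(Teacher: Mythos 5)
Your argument is correct and complete: part (1) is Hom-tensor adjunction applied to a free resolution of $N$ together with exactness of $D=\Hom_R(-,E)$, and part (2) uses the natural isomorphism $F\otimes_R D(M)\cong \Hom_R(\Hom_R(F,M),E)$ for finitely generated free $F$, which is exactly where the finiteness hypothesis on $N$ (over the Noetherian ring $R$) enters. The paper itself gives no proof, delegating the statement to a reference (Huneke, Example 3.6), and the standard argument found there is precisely the one you have written out, so there is nothing to add.
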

\begin{proof}
For the proof see \cite[Example 3.6]{h}.
\end{proof}
In the next as an application of Proposition \ref{31} we will give a characterization of grade:
\begin{cor}
Fix the notation of Proposition \ref{31} then we have:
\[
\grade(I, M)= \inf\{i\in \mathbb{Z}: \Tor_{i}^R(R/I, D(M))\neq 0\}.
\]
\end{cor}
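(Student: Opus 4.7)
The plan is to let $c = \grade(I,M)$ and use Proposition \ref{31}(b) with $N = R/I$, which lies in $V(I) = \Supp_R(R/I)$. The vanishing part of the statement comes for free from that proposition: for all $i < c$, $\Tor_i^R(R/I, D(M)) = 0$. So the whole task is to prove that $\Tor_c^R(R/I, D(M)) \neq 0$.

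Next, by the isomorphism in Proposition \ref{31}(b),
\[
\Tor_c^R(R/I, D(M)) \cong R/I \otimes_R D(H^c_I(M)).
\]
I would then rewrite the right-hand side via Lemma \ref{4.5}(2), applied to the finitely generated module $N = R/I$ and the (left-argument) module $H^c_I(M)$, in degree $0$:
\[
R/I \otimes_R D(H^c_I(M)) \cong D\bigl(\Hom_R(R/I, H^c_I(M))\bigr) \cong D\bigl(0:_{H^c_I(M)} I\bigr).
\]
So it suffices to show that $(0:_{H^c_I(M)} I) \neq 0$ and invoke the faithfulness of the Matlis dual functor over a Noetherian local ring (i.e.\ $D(X) = 0 \iff X = 0$, which follows because $E_R(k)$ is an injective cogenerator: any nonzero module has a cyclic submodule mapping onto $k$ by Nakayama, and this extends to $E$ by injectivity).

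The main point to verify is thus that $(0:_{H^c_I(M)} I)$ is nonzero. By definition of grade, $H^c_I(M) \neq 0$, and $H^c_I(M)$ is $I$-torsion. For any nonzero $m \in H^c_I(M)$, choose the minimal $n \geq 1$ with $I^n m = 0$; then $I^{n-1}m$ is a nonzero submodule annihilated by $I$, so $(0:_{H^c_I(M)} I) \neq 0$. Combining the three steps gives $\Tor_c^R(R/I, D(M)) \neq 0$ while $\Tor_i^R(R/I, D(M)) = 0$ for $i < c$, and hence $c = \inf\{i : \Tor_i^R(R/I, D(M)) \neq 0\}$. No real obstacle is expected; the only subtle point is the faithfulness of $D(\cdot)$ on arbitrary (not necessarily finitely generated) modules, which is standard for the Matlis dual over a Noetherian local ring.
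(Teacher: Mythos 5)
Your proof is correct and follows essentially the same route as the paper: both use Proposition~\ref{31}(b) to get vanishing for $i<c$ and to identify $\Tor_c^R(R/I,D(M))$ with $R/I\otimes_R D(H^c_I(M))$, then pass through Lemma~\ref{4.5}(2) to get $D(\Hom_R(R/I,H^c_I(M)))$. The only difference is the last step: the paper converts $\Hom_R(R/I,H^c_I(M))$ back to $\Ext^c_R(R/I,M)$ by Proposition~\ref{31}(a) and invokes the classical fact that $\Ext^c_R(R/I,M)\ne 0$ at $c=\grade(I,M)$, whereas you show $(0:_{H^c_I(M)}I)\ne 0$ directly from the $I$-torsion structure of $H^c_I(M)$; both rest on the same nonvanishing input and on faithfulness of $D(\cdot)$, so the arguments are essentially equivalent. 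One small imprecision: the nonvanishing $H^c_I(M)\ne 0$ is a standard characterization of grade rather than its definition (grade is usually defined via regular sequences or via $\Ext$), but the fact is correct.
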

\begin{proof}
By Proposition \ref{31} it will be enough to show that $\Tor_{c}^R(R/I, D(M))\neq 0$. Since $\grade(I,M)= c$ by  Proposition \ref{31} $(b)$ we get that
\[
\Tor_{c}^R(R/I, D(M))\cong R/I\otimes_R D(H^c_I(M)).
\]
Then by Lemma \ref{4.5} this module is isomorphic to $D(\Hom_R(R/I, H^c_I(M)))$. Again by Proposition \ref{31} $(a)$ we conclude that
\[
\Tor_{c}^R(R/I, D(M))\cong D(\Ext^c_R(R/I, M))\neq 0
\]
(see \cite[Remark 3.11]{h}) which completes the proof. To this end recall that $\Ext^c_R(R/I, M)\neq 0$ for $c=\grade(I,M).$
\end{proof}

Note that the following result is an extension of the Local Duality Theorem which was proved by Grothendieck (see \cite{goth}). Here we will generalize it to cohomologically complete intersection ideals $I\subseteq R$.

\begin{lem}\label{b1} \text {(\bf Generalized Local Duality)}
Let $R$ be any local ring and $I$ be an ideal of $R$ such that $H^i_I(R)= 0$ for all $i\neq c= \grade(I).$ Then for any $R$-module $M$ and for all $i\in \mathbb{Z}$ we have
\begin{itemize}
  \item[(a)]
$\Tor_{c-i}^R(M, H^c_{I}(R)) \cong H^i_{I}(M)$.
\item[(b)]
$D(H^i_I(M)) \cong \Ext^{c-i}_R(M, D(H^c_{I}(R)))$.
\end{itemize}
\end{lem}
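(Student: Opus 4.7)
The plan is to exploit the hypothesis $H^i_I(R)=0$ for $i\neq c$ as forcing the local cohomology complex of $R$ to be concentrated in a single cohomological degree, namely to be quasi-isomorphic to $H^c_I(R)$ placed in degree $c$. The natural vehicle is the \v{C}ech complex $\check{C}^\bullet$ on a set of generators of $I$ (or on any system whose radical equals $\Rad I$): it is a bounded complex of flat $R$-modules whose $i$-th cohomology is $H^i_I(-)$. Under the hypothesis, $\check{C}^\bullet$ has cohomology only in degree $c$, so canonical truncation produces a quasi-isomorphism $\check{C}^\bullet \simeq H^c_I(R)[-c]$; equivalently, $\check{C}^\bullet$ is a flat resolution of $H^c_I(R)$ shifted by $-c$.

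To prove (a), I would use that the flatness of the terms of $\check{C}^\bullet$ yields $\check{C}^\bullet \otimes_R M \simeq \check{C}^\bullet \otimes^L_R M$ in the derived category. Taking $i$-th cohomology of the left-hand side recovers $H^i_I(M)$, while on the right the quasi-isomorphism $\check{C}^\bullet \simeq H^c_I(R)[-c]$ identifies this with the $i$-th cohomology of $H^c_I(R)[-c] \otimes^L_R M$, namely $\Tor^R_{c-i}(H^c_I(R), M)$. Equating the two gives (a). A reader preferring to avoid derived categories may replace this step by the spectral sequence attached to the double complex $\check{C}^\bullet \otimes_R P_\bullet$ coming from a projective resolution $P_\bullet \to M$: the hypothesis collapses it to the single column indexed by $p = c$ and produces the same isomorphism.

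Part (b) is then a formal consequence of (a) via Matlis duality. Applying Lemma \ref{4.5}(1) with $N = M$ and with $H^c_I(R)$ in the second slot gives
\[
D\bigl(\Tor^R_{c-i}(M, H^c_I(R))\bigr) \cong \Ext^{c-i}_R\bigl(M, D(H^c_I(R))\bigr),
\]
and the left-hand side equals $D(H^i_I(M))$ by (a); thus the right-hand side is identified with $D(H^i_I(M))$, as required.

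The step I expect to be the main obstacle is the clean identification of $\check{C}^\bullet$ with $H^c_I(R)[-c]$ in a way that is manifestly compatible with $-\otimes_R M$; once this is in place, both (a) and (b) reduce to routine homological bookkeeping, and in particular (b) follows from (a) with no further input beyond Lemma \ref{4.5}(1).
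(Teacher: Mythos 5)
Your proof is correct, and it takes a genuinely different route from the paper's. The paper proves $(a)$ by a universality-of-$\delta$-functors argument: it checks, following the template of \cite[Theorem 1.3.5]{b}, that the two families $\{\Tor^R_{c-i}(\cdot,H^c_I(R))\}$ and $\{H^i_I(\cdot)\}$ are connected sequences of functors, that they agree in degree $i=c$ (using that the hypothesis forces $H^c_I(\cdot)$ to be right exact, hence $H^c_I(M)\cong H^c_I(R)\otimes_R M$), and that both vanish on projectives away from degree $c$; the uniqueness theorem then identifies the two families. Your argument instead works directly at the level of complexes: you observe that the \v{C}ech complex $\check{C}^\bullet$ is a bounded complex of flat (hence K-flat) modules with cohomology concentrated in degree $c$, so $\check{C}^\bullet\simeq H^c_I(R)[-c]$ in $D(R)$ and $\check{C}^\bullet\otimes_R M$ computes $H^c_I(R)[-c]\otimes^L_R M$; reading off cohomology in degree $i$ gives $(a)$. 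The two approaches trade off as you'd expect. The $\delta$-functor route stays in the classical language of Brodmann--Sharp and avoids any explicit mention of the \v{C}ech complex or derived/spectral-sequence bookkeeping, but it must separately verify the agreement in degree $c$ (which itself secretly uses that $\operatorname{cd}(I)=\operatorname{cd}(I,R)$). Your route needs the K-flatness observation (or the equivalent collapsing double-complex spectral sequence, as you note), but it delivers both the degree-$c$ isomorphism and the isomorphisms in all other degrees in one stroke, with no case analysis. One small caveat of phrasing: calling $\check{C}^\bullet$ a ``flat resolution of $H^c_I(R)$ shifted by $-c$'' is slightly loose since $\check{C}^\bullet[c]$ may have nonzero terms in positive cohomological degrees; what you actually use, and what is correct, is that $\check{C}^\bullet$ is a bounded complex of flats quasi-isomorphic to $H^c_I(R)[-c]$, hence a K-flat replacement. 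The deduction of $(b)$ from $(a)$ via Lemma~\ref{4.5}(1) is identical in both treatments.
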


\begin{proof}
It is enough to prove the statement $(a)$ because of Lemma \ref{4.5}(1). To do this we will show that the family of the functors $\{\Tor_{c-i}^R(\cdot, H^c_{I}(R)): i\geq 0\}$ and $\{H^i_{I}(\cdot): i\geq 0\}$ are isomorphic. By \cite[Theorem 1.3.5]{b} we have to show the following conditions:
\begin{itemize}
  \item [(i)] For $i= c$ they are isomorphic.
  \item [(ii)] If $M$ is projective $R$-module then $\Tor_{c-i}^R(M, H^c_{I}(R))= 0= H^i_{I}(M)$ for all $i\neq c.$
  \item [(iii)] Both of the family of the functors $\{\Tor_{c-i}^R(\cdot, H^c_{I}(R)): i\geq 0\}$ and $\{H^i_{I}(\cdot): i\geq 0\}$ induces a connected long exact sequence of cohomologies from a given short exact sequence of $R$-modules $0\to N'\to N\to N''\to 0.$
\end{itemize}
Note that by definition $(iii)$ is satisfied. For $i= c$ note that our assumption $H^i_I(R)= 0$ for all $i\neq c= \grade(I)$ implies that the functor $H^c_I(\cdot)$ is right exact. So we have the following isomorphism
\[
H^c_{I}(R)\otimes_R M\cong H^c_{I}(M)
\]
which proves $(i)$. Now we only need to prove $(ii)$. Suppose firstly that $M= R$ then the result is true. Now suppose that $M$ is any projective $R$-module then $M\cong \oplus R.$ Hence $(ii)$ is also true. To this end recall that both the functors $\Tor_{c-i}^R(\cdot,H^c_{I}(R))$ and $H^i_I(\cdot)$ commutes with direct sums.
\end{proof}

In order to prove one of the main results we need the definition of the canonical module. To do this we recall the Local Duality Theorem (see \cite{goth}). Let $(R,\mathfrak m)$ denote a homomorphic image of a
local Gorenstein ring $(S,{\mathfrak n})$ with $\dim(S)=t.$ Let $N$ be a finitely generated $R$-module. Then by the Local Duality Theorem there is an isomorphism
\[
H^i_{\mathfrak m} (N)\cong D(\Ext^{t-i}_S(N, S))
\]
for all $i\in \mathbb N$ (see \cite{goth}). Now we are able to define the canonical module as follows:

\begin{defn}
Under the circumstances of the above Local Duality Theorem we define
\[
K_N:= \Ext^{t-r}_S(N, S), \dim(N)= r
\]
as the canonical module of $N$. It was introduced by Schenzel (see \cite{p4}) as
the generalization of the canonical module of a Cohen-Macaulay ring (see e.g. \cite{her}).
\end{defn}

\section{On The Endomorphism Ring}
For an $R$-module $M$ there is a natural homomorphism
\[
R\to \Hom_R(M,M), r\to f_r
\]
where $f_r: M\to M$, $m\to rm$, for all $m\in M$ and $r\in R.$ This homomorphism is in general neither injective nor surjective. In this section we will generalize one of the main results of
\cite[Theorem 2.2]{h1} and \cite[Theorem 1.2]{p7} to a finitely generated module (resp. the canonical module). To do this we need the definition of the truncation complex which is actually introduced by Schenzel (see \cite{p4}). Let $(R,\mathfrak m)$ be a local ring of dimension $n$ and $M$ be a finitely generated $R$-module. Let $I\subseteq R$
be an ideal of $R$ with $\grade(I, M) = c$. Suppose that $E^{\cdot}_R(M)$ is a minimal injective resolution of $M.$ Then it follows that
\[
E^{\cdot}_R(M)^i\cong\bigoplus\limits_{{\mathfrak p}\in \Supp_R(M)}E_R(R/{\mathfrak p})^{\mu_{i}({\mathfrak p}, M)}
\]
where $\mu_{i}({\mathfrak p}, M)= \dim_{k(\mathfrak{p})}(\Ext_{R_\mathfrak{p}}^i(k(\mathfrak{p}),M_\mathfrak{p}))$ and $k(\mathfrak{p})=R_\mathfrak{p}/\mathfrak{p}R_\mathfrak{p}$. Let $\Gamma_I(-)$ denote the section functor with support in $I$. Then $\Gamma_I(E^{\cdot}_R(M))^i= 0$ for all $i< c$ (see \cite[Definition 4.1]{p4} for more details).

\begin{defn} \label{2.2}
Let $C^{\cdot}_M(I)$ be the cokernel of the above embedding. It is called the truncation
complex of $M$ with respect to the ideal $I$. So there is a short exact sequence of complexes of $R$-modules
\[
0\rightarrow H^c_I(M)[-c]\rightarrow \Gamma_I(E^{\cdot}_R(M))\rightarrow C^{\cdot}_M(I)\rightarrow 0.
\]
\end{defn}
The following Lemma is the generalization of \cite[Theorem 1.1 and Corollary 1.2]{p1} to a finitely generated module.

\begin{lem}\label{3}
Let $R$ be a complete local ring of dimension $n$ and $I\subseteq R$ be an ideal. For a finitely generated $R$-module $M$ let $\grade(I,M)= c$. Then the following are true:
\begin{itemize}
  \item [(a)] There is an isomorphism
\[
\Hom_{R}(H^c_{I}(M),H^c_{I}(M))\cong \Hom_{R}(D(H^c_{I}(M)),D(H^c_{I}(M))).
\]

 \item [(b)] The natural homomorphism
\[
R\to \Hom_{R}(H^c_{I}(M),H^c_{I}(M))
\]
is an isomorphism if and only if the natural homomorphism
\[
R\to \Hom_{R}(D(H^c_{I}(M)),D(H^c_{I}(M)))
\]
is an isomorphism if and only if the natural homomorphism
\[
H^c_{I}(M)\otimes_R D(H^c_{I}(M))\to E
\]
is an isomorphism.
\end{itemize}
\end{lem}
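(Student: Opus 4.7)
The plan is to identify both endomorphism modules in~(a) with $\Ext^c_R(H^c_I(M), M)$, deduce~(a), and then handle the three-way equivalence in~(b) by combining this identification with a faithfulness-of-Matlis-dual argument.

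For part~(a), I apply Proposition~\ref{31}(a) with $N = H^c_I(M)$ (its support lies in $V(I)$ since $H^c_I(M)$ is $I$-torsion) to obtain
\[
\Hom_R(H^c_I(M), H^c_I(M)) \cong \Ext^c_R(H^c_I(M), M).
\]
For the Matlis-dual side, the Hom--tensor adjunction $\Hom_R(X, D(Y)) \cong D(X \otimes_R Y)$ yields
\[
\Hom_R(D(H^c_I(M)), D(H^c_I(M))) \cong D\bigl(H^c_I(M) \otimes_R D(H^c_I(M))\bigr).
\]
Proposition~\ref{31}(b) with $N = H^c_I(M)$ identifies the inner tensor with $\Tor_c^R(H^c_I(M), D(M))$, and Lemma~\ref{4.5}(1) converts its Matlis dual into $\Ext^c_R(H^c_I(M), DD(M))$. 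Matlis reflexivity of the finitely generated module $M$ over the complete local ring $R$ gives $DD(M) \cong M$, producing the same $\Ext^c_R(H^c_I(M), M)$ and completing~(a).

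For the first equivalence in~(b), both natural maps from $R$ are $R$-linear and send $1$ to the identity endomorphism of the relevant module. All isomorphisms in~(a) are natural, so chasing the identity through the chain shows both maps factor through a common canonical element $\xi \in \Ext^c_R(H^c_I(M), M)$; hence one map is an isomorphism precisely when the other is. For the second equivalence, I apply $D$ to the evaluation map $\epsilon \colon H^c_I(M) \otimes_R D(H^c_I(M)) \to E$. Using $D(E) \cong R$ (because $R$ is complete) and the adjunction once more, $D(\epsilon)$ takes the form
\[
R \cong D(E) \to D\bigl(H^c_I(M) \otimes_R D(H^c_I(M))\bigr) \cong \Hom_R(D(H^c_I(M)), D(H^c_I(M))),
\]
and unravelling the adjunction shows it coincides with the natural map appearing in~(b). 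Since $E$ is an injective cogenerator, $D$ reflects isomorphisms, so $\epsilon$ is an isomorphism iff $D(\epsilon)$ is an isomorphism iff the natural map in~(b) is an isomorphism.

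The hardest point is the naturality verification underlying the first equivalence: confirming that the two isomorphisms built in~(a), one from Proposition~\ref{31}(a) and the other from the chain Hom--tensor adjunction $\to$ Proposition~\ref{31}(b) $\to$ Lemma~\ref{4.5}(1) $\to$ Matlis reflexivity of $M$, carry the identity endomorphisms of $H^c_I(M)$ and $D(H^c_I(M))$ to the same element of $\Ext^c_R(H^c_I(M), M)$. This is Schenzel's technical compatibility argument from~\cite{p1}, and its extension here rests entirely on replacing $DD(R) \cong R$ by $DD(M) \cong M$, which is available precisely because $M$ is finitely generated over the complete local ring $R$.
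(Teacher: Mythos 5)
Your proposal is correct and follows essentially the same route as the paper: both identify the two endomorphism modules with $\Ext^c_R(H^c_I(M),M)$ via Proposition~\ref{31}, the Hom--tensor adjunction of Lemma~\ref{4.5}, and Matlis reflexivity $DD(M)\cong M$ over the complete ring, and both obtain the third equivalence in~(b) by dualizing the evaluation map and using $D(E)\cong R$. Your explicit attention to the naturality of the identifications in~(b) is a point the paper passes over more quickly, but it is the same argument.
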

\begin{proof}
Note that Lemma \ref{4.5} implies that there is an isomorphism
\[
\Hom_{R}(D(H^c_{I}(M)),D(H^c_{I}(M)))\cong D(D(H^c_{I}(M))\otimes_R H^c_{I}(M))
\]
and the module on the right side is isomorphic to
\[
D(\Tor_c^R(H^c_{I}(M),D(M)))\cong \Ext_R^c(H^c_{I}(M), M)
\]
(see Proposition \ref{31} $(b)$). To this end recall that $R$ is complete and by Matlis Duality the double Matlis dual of $M$ is isomorphic to $M$. Again by Proposition \ref{31} $(a)$ it induces that following isomorphism
\[
\Hom_{R}(D(H^c_{I}(M)),D(H^c_{I}(M)))\cong \Ext_R^c(H^c_{I}(M), M)\cong \Hom_{R}(H^c_{I}(M),H^c_{I}(M))
\]
which completes the proof of $(a)$. Note that by the isomorphism in $(a)$ the natural homomorphism
\[
R\to \Hom_{R}(H^c_{I}(M),H^c_{I}(M))
\]
is an isomorphism if and only if the natural homomorphism
\[
R\to \Hom_{R}(D(H^c_{I}(M)),D(H^c_{I}(M)))
\]
is an isomorphism. Moreover by Matlis duality a homomorphism $X\to Y$ between two $R$-modules is an isomorphism if and only if $D(X)\to D(Y)$ is an isomorphism. Then by Lemma \ref{4.5}(1) it follows that the natural homomorphism
\[
H^c_{I}(M)\otimes_R D(H^c_{I}(M))\to E
 \]
is an isomorphism if and only if $R\to \Hom_{R}(D(H^c_{I}(M),D(H^c_{I}(M))$ is an isomorphism. To this end recall that $R$ is complete and $\Hom_R(E,E)\cong R$. This completes the proof of the Lemma.
\end{proof}
\begin{lem}\label{3a}
Let $I$ be an ideal of a complete local ring $R$. Let $M$ be a finitely generated $R$-module such that $H^i_{I}(M)= 0$ for all $i\neq c=\grade(I,M)$. Then for each fixed integer $i\neq c$ the following  hold:
\begin{itemize}
\item [(a)] There are the following isomorphisms:
\begin{itemize}
\item [(i)] $\Ext_{R}^{i-c}(H^c_{I}(M),H^c_{I}(M))\cong \Ext_{R}^{i}(H^c_{I}(M),M).$
\item [(ii)] $ \Tor_{i-c}^R(H^c_I(M), D(H^c_{I}(M)))\cong \Tor_{i}^R(H^c_I(M), D(M)).$
\end{itemize}
\item [(b)] The following are equivalent:
\begin{itemize}
\item [(i)] $\Ext_{R}^{i-c}(H^c_{I}(M),H^c_{I}(M))=0.$
\item [(ii)] $\Ext_{R}^{i-c}(D(H^c_{I}(M)),D(H^c_{I}(M)))=0.$

\item [(iii)] $ \Tor_{i-c}^R(H^c_I(M), D(H^c_{I}(M)))=0.$
\end{itemize}
\end{itemize}
\end{lem}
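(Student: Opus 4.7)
The starting point is that under the hypothesis $H^i_I(M)=0$ for all $i\neq c$, the truncation complex $C_M^\bullet(I)$ of Definition \ref{2.2} is acyclic, so the defining short exact sequence yields a quasi-isomorphism $\Gamma_I(E_R^\bullet(M))\simeq H^c_I(M)[-c]$, equivalently $R\Gamma_I(M)\simeq H^c_I(M)[-c]$ in the derived category. For (a)(i) I combine this with the standard identification $R\Hom_R(N,M)\simeq R\Hom_R(N,R\Gamma_I(M))$ available whenever $\Supp_R(N)\subseteq V(I)$. Applied to $N=H^c_I(M)$, this yields $R\Hom_R(H^c_I(M),M)\simeq R\Hom_R(H^c_I(M),H^c_I(M))[-c]$, whose $i$-th cohomology is the asserted isomorphism for every $i$, in particular for $i\neq c$. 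This mirrors Schenzel's argument for $M=R$ in \cite{p1}.

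For (a)(ii) the plan is to apply the Matlis dual functor $D$ to the short exact sequence $0\to\Gamma_I(E_R^\bullet(M))\to E_R^\bullet(M)\to E_R^\bullet(M)/\Gamma_I(E_R^\bullet(M))\to 0$. Since $D$ is exact and converts each injective hull $E_R(R/\mathfrak p)$ into a flat $R$-module (identifiable with the completion $\widehat{R_\mathfrak p}$), the result is a short exact sequence of complexes of flat $R$-modules in which the middle term $D(E_R^\bullet(M))$ is a flat resolution of $D(M)$ and the right-hand term $D(\Gamma_I(E_R^\bullet(M)))$ is quasi-isomorphic to $D(H^c_I(M))[c]$ by the previous paragraph. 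Tensoring with $H^c_I(M)$ preserves exactness, and the resulting long exact homology sequence relates $\Tor_i^R(H^c_I(M),D(M))$ (from the middle) to $\Tor_{i-c}^R(H^c_I(M),D(H^c_I(M)))$ (from the right). To conclude (a)(ii) it suffices to show that the leftmost complex becomes acyclic upon tensoring with $H^c_I(M)$; this reduces to the observation that every summand $E_R(R/\mathfrak p)$ with $\mathfrak p\not\supseteq I$ admits some $f\in I\setminus\mathfrak p$ acting invertibly on $D(E_R(R/\mathfrak p))$ while acting locally nilpotently on the $I$-torsion module $H^c_I(M)$.

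Part (b) follows mechanically from (a) together with Lemma \ref{4.5}(1) and the faithfulness of $D$. Applied with $N=H^c_I(M)$ and $Y=H^c_I(M)$, Lemma \ref{4.5}(1) gives $\Ext^{i-c}_R(D(H^c_I(M)),D(H^c_I(M)))\cong D\,\Tor_{i-c}^R(H^c_I(M),D(H^c_I(M)))$, so (ii) $\Leftrightarrow$ (iii) because $D(X)=0\Leftrightarrow X=0$. For (i) $\Leftrightarrow$ (iii), part (a)(i) translates (i) into the vanishing $\Ext_R^i(H^c_I(M),M)=0$; Lemma \ref{4.5}(1) together with the Matlis reflexivity $D(D(M))\cong M$ of the finitely generated module $M$ over the complete local ring $R$ identifies this with $\Tor_i^R(H^c_I(M),D(M))=0$; and (a)(ii) ties the latter to (iii). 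The main obstacle is the acyclicity claim in the previous paragraph: Matlis duality turns the direct sum $\bigoplus_{\mathfrak p\not\supseteq I}E_R(R/\mathfrak p)^{\mu_i(\mathfrak p,M)}$ into a product of flats, and since tensor need not commute with infinite products in general, securing the vanishing requires care beyond a naive term-by-term argument.
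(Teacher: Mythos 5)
Your overall strategy coincides with the paper's: use that $\Gamma_I(E_R^{\cdot}(M))$ is an injective resolution of $H^c_I(M)[-c]$ (so that its Matlis dual is a flat resolution of $D(H^c_I(M))[c]$), identify $\Ext^{i}_R(H^c_I(M),M)$ with $\Ext^{i-c}_R(H^c_I(M),H^c_I(M))$ by comparing $\Hom_R(H^c_I(M),E_R^{\cdot}(M))$ with $\Hom_R(H^c_I(M),\Gamma_I(E_R^{\cdot}(M)))$, and deduce (b) from (a) via Lemma \ref{4.5} and Matlis reflexivity of $M$, exactly as the paper does via the isomorphisms \eqref{wq} and \eqref{wq1}. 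Parts (a)(i) and (b) are complete: the paper quotes \cite[Proposition 1.3]{pet1} for the Hom comparison, and your derived-category formulation is the same statement; the termwise vanishing $\Hom_R(N,E_R(R/\mathfrak p))\cong \Hom_{R_{\mathfrak p}}(N_{\mathfrak p},E_R(R/\mathfrak p))=0$ for $\mathfrak p\notin V(I)$ and $\Supp_R(N)\subseteq V(I)$ settles it, since a map into a direct sum vanishes once all its components do.

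The one genuine gap is the acyclicity claim you flag at the end of (a)(ii): you must show that $H^c_I(M)\otimes_R D\bigl(E_R^{\cdot}(M)/\Gamma_I(E_R^{\cdot}(M))\bigr)$ vanishes, and, as you correctly observe, $D$ turns the direct sum $\bigoplus_{\mathfrak p\notin V(I)}E_R(R/\mathfrak p)^{\mu_j(\mathfrak p,M)}$ into a \emph{product} of flat modules, so a naive termwise argument with some $f\in I\setminus\mathfrak p$ acting invertibly does not go through. The paper closes this point by citing \cite[Lemma 2.2]{p1}, and the proof of that lemma is precisely the missing step. It runs as follows. Put $T=H^c_I(M)$ and $F=\bigoplus_{\mathfrak p\notin V(I)}E_R(R/\mathfrak p)^{\mu_j(\mathfrak p,M)}$. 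Write $T=\varinjlim T_\alpha$ over its finitely generated submodules; since $T$ is $I$-torsion, each $T_\alpha$ is annihilated by a power of $I$. Because tensor products commute with direct limits, it suffices to prove $T_\alpha\otimes_R D(F)=0$. By Lemma \ref{4.5}(2) — finite generation of $T_\alpha$ is exactly what lets you trade the intractable infinite product for a Hom into a direct sum — one has $T_\alpha\otimes_R D(F)\cong D(\Hom_R(T_\alpha,F))$, and $\Hom_R(T_\alpha,F)=0$ because $(T_\alpha)_{\mathfrak p}=0$ for every $\mathfrak p\notin V(I)$ while each $E_R(R/\mathfrak p)$ occurring in $F$ is an $R_{\mathfrak p}$-module. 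With this paragraph inserted, your argument is complete and is, in substance, the paper's proof.
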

\begin{proof}
Let $E^{\cdot}_R(M)$ be a minimal injective resolution of $M$. Since $H^i_{I}(M)= 0$ for all $i\neq c$ then the complex $\Gamma_I(E^{\cdot}_R(M))$ is a minimal injective resolution of $H_I^c(M)[-c]$. Moreover $\Supp_R(H_I^c(M))\subseteq V(I)$ so by \cite[Proposition 1.3]{pet1} there is an isomorphism
\[
\Ext_R^{i-c}(H^c_{I}(M),H^c_{I}(M))\cong \Ext_R^{i}(H^c_{I}(M),M)
\]
for each $i\in \mathbb{Z}$ which gives the first isomorphism of the statement $(a)$. The application of the Matlis dual functor $\Hom_R(\cdot, E)$ to the minimal injective resolution of $M$ (resp. $H^c_I(M))[-c]$) yields that the following complexes
\[
A:= D(E^{\cdot}_R(M)) \text { resp. } B:= D(\Gamma_I(E^{\cdot}_R(M)))
 \]
is a flat resolution of $D(M)$( resp. $D(H^c_I(M))[c]$). Since $\Supp_R(H_I^c(M))\subseteq V(I)$ so by \cite[Lemma 2.2]{p1} there is an isomorphism $H^c_I(M)\otimes_R A\cong  H^c_I(M)\otimes_R B$. This induces the following isomorphisms in cohomologies
\[
\Tor^R_{i-c}(H^c_I(M), D(H^c_{I}(M)))\cong \Tor^R_{i}(H^c_I(M), D(M)).
\]
for all $i\in \mathbb{Z}$. This completes the proof of $(a)$. By Lemma \ref{4.5} there is an isomorphism
\begin{equation}\label{wq}
\Ext_{R}^{i-c}(D(H^c_{I}(M)),D(H^c_{I}(M)))\cong D(\Tor^R_{i-c}(D(H^c_{I}(M)),H^c_I(M)))
\end{equation}
for all $i\in \mathbb{Z}$. Again by Lemma \ref{4.5} and Matlis Duality we get that
\begin{equation}\label{wq1}
D(\Tor_i^R(H^c_{I}(M),D(M)))\cong \Ext_R^i(H^c_{I}(M), M).
\end{equation}
Hence the equivalence of the vanishing of the statement $(b)$ is easy by virtue of $(a)$ and the isomorphisms \ref{wq}, \ref{wq1}.
\end{proof}

\begin{thm}\label{5}
Let $R$ be a complete Cohen-Macaulay ring of dimension $n$. Suppose that $H^i_I(R)= 0$ for all $i\neq c= \grade (I)$ where $I\subseteq R$ is an ideal. Then the following hold:
\begin{itemize}
  \item [(a)] The natural homomorphism
\[
R\to \Hom_{R}(H^c_{I}(K_R),H^c_{I}(K_R))
\]
is an isomorphism and for all $i\neq c$
\[
\Ext_{R}^{i-c}(H^c_{I}(K_R),H^c_{I}(K_R))= 0.
\]

\item [(b)] The natural homomorphism
\[
R\to \Hom_{R}(D(H^c_{I}(K_R)),D(H^c_{I}(K_R)))
\]
is an isomorphism and for all $i\neq c$
\[
\Ext_{R}^{i-c}(D(H^c_{I}(K_R)),D(H^c_{I}(K_R)))= 0.
\]

\item [(c)] The natural homomorphism
\[
H^c_{I}(K_R)\otimes_R D(H^c_{I}(K_R))\to E
\]
is an isomorphism and for all $i\neq c$
\[
\Tor_{i-c}^R(H^c_I(K_R), D(H^c_{I}(K_R)))= 0.
\]
\end{itemize}
\end{thm}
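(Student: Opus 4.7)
The plan is to reduce all three parts to the cohomologically complete intersection results for $R$ itself (due to Hellus--St\"uckrad and Schenzel) and to use Lemmas~\ref{3} and~\ref{3a} to pass between the three equivalent formulations. First I would verify that $H^i_I(K_R) = 0$ for every $i \neq c$. Since $R$ is Cohen--Macaulay, $K_R$ is maximal Cohen--Macaulay, so any $R$-regular sequence in $I$ is also $K_R$-regular and $\grade(I, K_R) = c$, giving $H^i_I(K_R) = 0$ for $i < c$. For $i > c$, Generalized Local Duality (Lemma~\ref{b1}(a)) applied with $M = K_R$ yields $H^i_I(K_R) \cong \Tor^R_{c-i}(K_R, H^c_I(R)) = 0$, since $c - i < 0$.

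Next I would establish the key identification $\Ext^i_R(H^c_I(K_R), K_R) \cong \Ext^i_R(H^c_I(R), R)$ for every $i$. The vanishing just proved, combined again with Lemma~\ref{b1}(a), gives $\Tor^R_j(K_R, H^c_I(R)) \cong H^{c-j}_I(K_R) = 0$ for all $j > 0$; hence $K_R \otimes_R^{\mathrm{L}} H^c_I(R) \simeq K_R \otimes_R H^c_I(R)$, and by the right-exactness of $H^c_I(\cdot)$ (used inside the proof of Lemma~\ref{b1}) this coincides with $H^c_I(K_R)$. Since $R$ is a complete Cohen--Macaulay ring with canonical module $K_R$, one has $\mathrm{R}\Hom_R(K_R, K_R) \simeq R$. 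Derived Hom-tensor adjunction then gives
\[
\mathrm{R}\Hom_R(H^c_I(K_R), K_R) \simeq \mathrm{R}\Hom_R\bigl(H^c_I(R), \mathrm{R}\Hom_R(K_R, K_R)\bigr) \simeq \mathrm{R}\Hom_R(H^c_I(R), R),
\]
and taking cohomology yields the claimed isomorphism.

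Then I would invoke the cohomologically complete intersection results of Hellus--St\"uckrad \cite{h1} and Schenzel \cite{p6}: under the present hypothesis the natural map $R \to \Hom_R(H^c_I(R), H^c_I(R))$ is an isomorphism and $\Ext^i_R(H^c_I(R), H^c_I(R)) = 0$ for $i > 0$, which (via Proposition~\ref{31}(a) and Lemma~\ref{3a}(a)(i) applied to $M = R$) is equivalent to $\Ext^c_R(H^c_I(R), R) \cong R$ via the natural map and $\Ext^i_R(H^c_I(R), R) = 0$ for $i \neq c$. Transferring through the isomorphism of the previous paragraph yields $\Ext^c_R(H^c_I(K_R), K_R) \cong R$ and $\Ext^i_R(H^c_I(K_R), K_R) = 0$ for $i \neq c$. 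Proposition~\ref{31}(a) with $N = H^c_I(K_R)$, $M = K_R$, and Lemma~\ref{3a}(a)(i) (applicable now that $H^i_I(K_R) = 0$ for $i \neq c$ is established) then translate these into $R \cong \Hom_R(H^c_I(K_R), H^c_I(K_R))$ via the natural map and $\Ext^{i-c}_R(H^c_I(K_R), H^c_I(K_R)) = 0$ for $i \neq c$, proving (a). Parts (b) and (c) will follow formally: the isomorphism statements from Lemma~\ref{3}(b), and the $\Ext$/$\Tor$ vanishings from Lemma~\ref{3a}(b), all applied with $M = K_R$.

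The main obstacle will be the cancellation of $K_R$ via $\mathrm{R}\Hom_R(K_R, K_R) \simeq R$, which relies both on the Cohen--Macaulay hypothesis and on the $\Tor$-vanishing derived from generalized local duality. One also has to check that the composite chain of natural identifications carries the multiplication map $r \mapsto (x \mapsto rx)$ on $H^c_I(K_R)$ to the corresponding map on $H^c_I(R)$, so that the Hellus--St\"uckrad isomorphism for $R$ entails the claimed natural isomorphism for $K_R$.
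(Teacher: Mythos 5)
Your reduction is sound in outline but takes a genuinely different route from the paper. The paper does not pass through the known case $M=R$ at all: it runs the truncation-complex computation directly on $K_R$, applying $\Hom_R(-,E^{\cdot}_R(K_R))$ to the exact sequence $0\to H^c_I(K_R)[-c]\to \Gamma_I(E^{\cdot}_R(K_R))\to C^{\cdot}_{K_R}(I)\to 0$, identifying the middle term with $\lim\limits_{\longleftarrow}(R/I^r\otimes_R X)$ for $X=\Hom_R(E^{\cdot}_R(K_R),E^{\cdot}_R(K_R))$, and using that over a Cohen--Macaulay ring $E^{i}_R(K_R)\cong\oplus_{\height\mathfrak p=i}E_R(R/\mathfrak p)$ is bounded, so that $X$ is a genuine flat resolution of $R$ and the inverse limit returns $R$ by completeness. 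Your argument instead cancels $K_R$ abstractly via $\mathrm{R}\Hom_R(K_R,K_R)\simeq R$ and the adjunction $\mathrm{R}\Hom_R(K_R\otimes^{\mathrm{L}}_RH^c_I(R),K_R)\simeq \mathrm{R}\Hom_R(H^c_I(R),\mathrm{R}\Hom_R(K_R,K_R))$; the inputs you need for this ($\grade(I,K_R)=c$, $\Tor^R_j(K_R,H^c_I(R))=0$ for $j>0$ via Lemma \ref{b1}, right-exactness of $H^c_I(\cdot)$) are all correctly justified, and the final translation through Proposition \ref{31}(a) and Lemmas \ref{3}, \ref{3a} matches the paper's.

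The one step you should not treat as a pure citation is the base case. Hellus--St\"uckrad prove exactly the degree-zero statement, that $R\to\Hom_R(H^c_I(R),H^c_I(R))$ is an isomorphism; what your reduction actually consumes is the full statement $\Ext^i_R(H^c_I(R),R)=0$ for all $i\neq c$ together with the natural isomorphism $R\cong\Ext^c_R(H^c_I(R),R)$, over a complete local ring that is Cohen--Macaulay but not assumed Gorenstein. That statement is true (it follows, e.g., from $\mathrm{R}\Hom_R(\mathrm{R}\Gamma_I(R),R)$ being the derived $I$-adic completion of $R$, which is $R$ by completeness, combined with $\mathrm{R}\Gamma_I(R)\simeq H^c_I(R)[-c]$), but it is not literally the quoted theorem, and the naive truncation-complex proof of it for $M=R$ is actually harder than for $M=K_R$: when $R$ is not Gorenstein the minimal injective resolution $E^{\cdot}_R(R)$ is unbounded and lacks the height grading that makes the paper's complex $X$ a left-bounded flat resolution. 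So either supply the derived-completion argument for the base case explicitly, or note that this is precisely why the paper chooses to compute with $K_R$ directly. You are also right that the chain of identifications must be checked to carry $1\in R$ to the identity endomorphism of $H^c_I(K_R)$; that verification is routine but should be written out.
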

\begin{proof}
Firstly note that by Lemmas \ref{3} and \ref{3a} it is enough to prove the statement $(a)$. To this end note that the canonical module $K_R$ of $R$ exists and
\[
\grade (I, K_R)=\dim_R(K_R)-\dim_R(K_R/IK_R)
\]
(see \cite{her}). It follows that $c= \grade (I, K_R)$. Let $E^{\cdot}_R(K_R)$ be a minimal injective resolution of $K_R$ then apply the functor $\Hom_R( ., E^{\cdot}_R(K_R))$ to the short exact sequence of the truncation complex. It induces a short exact sequence of complexes of $R$-modules
\begin{gather*}
0\rightarrow \Hom_R(C^{\cdot}_{K_R}(I), E^{\cdot}_R(K_R))\to \Hom_R(\Gamma_I(E^{\cdot}_R(K_R)),
E^{\cdot}_R(K_R))\\
\to \Hom_R(H^c_I(K_R), E^{\cdot}_R(K_R))[c]\to 0.
\end{gather*}
Note that $ E^{\cdot}_R(K_R)$ is a complex of injective $R$-modules. By the definition of the section functor $\Gamma_I(\cdot)$ and a standard isomorphism
for the direct and inverse limits  the complex in the middle is isomorphic to
\[
\lim_{\longleftarrow}
\Hom_R(\Hom_R(R/I^r,E^{\cdot}_R(K_R)), E^{\cdot}_R(K_R))
\cong \lim_{\longleftarrow} (R/I^r\otimes_{R} \Hom_R(E^{\cdot}_R(K_R),
E^{\cdot}_R(K_R))).
\]
For the last isomorphism note that $R/I^r$ is a finitely generated $R$-module for all $r \geq 1.$
Let $X := \Hom_R(E^{\cdot}_R(K_R), E^{\cdot}_R(K_R))$ then there is a quasi isomorphism
\[
X \qism \Hom_R(K_R, E^{\cdot}_R(K_R)).
\]
By definition of Hom of complexes we have
\[
X^k = \prod_{i \in \mathbb{Z}}
\Hom_R(E^i_R(K_R), E^{i+k}_R(K_R)).
\]
Therefore $X^k, k \in \mathbb{Z},$ is a flat $R$-module. Since $R$ is Cohen-Macaulay so $H_\mathfrak{m}^i(R)= 0$ for all $i\neq n$. By the Generalized Local Duality (see Lemma \ref{b1}) this implies that $H^k(X) \cong \Ext^k_R(K_R,K_R)= 0$ for all $k \neq 0$ and $H^0(X) \cong \Hom_R(K_R,K_R)\cong R$ . Also note that
\[
E^{i}_R(K_R) \cong
\oplus_{\height{p} =i}E_R(R/\mathfrak{p}).
\]
To this end recall that $R$ is Cohen-Macaulay and $K_R$ is of finite injective dimension. Then it implies that $X^k= 0$ for all $k> 0$. Hence the complex $X$ becomes a flat resolution of $R$. So the cohomologies of the complex $\lim\limits_{\longleftarrow} (R/I^r\otimes_{R} X)$ are zero for all $i\neq 0$ and it is $R$ for $i= 0$ (since $R$ is complete). Moreover note that $H^i_I(K_R)= 0$ for all $i\neq c$. Then the complex $\Hom_R(C^{\cdot}_{K_R}(I), E^{\cdot}_R(K_R)) $ is an exact complex. Therefore from the long exact sequence of cohomology of the above short exact sequence it follows that the natural homomorphism
\[
R\to \Ext_{R}^{c}(H^c_{I}(K_R),K_R)
\]
is an isomorphism and for all $i\neq c$
\[
\Ext_{R}^{i}(H^c_{I}(K_R),K_R)= 0.
\]
This proves the statement $(a)$ by virtue of Proposition \ref{31} $(a)$ and Lemma \ref{3a} $(a)$.

\end{proof}
Note that the following Proposition is actually proved by Schenzel (see \cite[Theorem 1.2]{p7}) in case of a complete Gorenstein local ring. Here we will give a generalization of it to a finitely generated module $M$ over an arbitrary local ring $R$.

\begin{prop}\label{21.3}
Let $R$ be a local ring of dimension $n$ and $J\subset I$ be two ideals of $R$. Suppose that $\grade(I,M)=c= \grade(J,M)$ for a finitely generated $R$-module $M$. Then we have:
\begin{itemize}
  \item [(a)] There is a natural morphism
\[
\Hom_{R}(H^c_{J}(M),H^c_{J}(M))\to \Hom_{R}(H^c_{I}(M),H^c_{I}(M)).
\]
\item [(b)] Suppose that $\Rad IR_{\mathfrak{p}}= \Rad JR_{\mathfrak{p}}$ for all ${\mathfrak{p}}\in V(J)\cap \Supp_R(M)$ such that $\depth_{R_\mathfrak{p}} (M_{\mathfrak{p}})\leq c$ then the above natural homomorphism is an isomorphism.
\item [(c)] Suppose in addition $R$ is complete then there is a natural homomorphism
\[
\Hom_{R}(D(H^c_{J}(M)),D(H^c_{J}(M)))\to \Hom_{R}(D(H^c_{I}(M)),D(H^c_{I}(M)))
\]
and under the assumption of $(b)$ it is an isomorphism.
\end{itemize}
\end{prop}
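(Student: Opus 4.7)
To construct the natural map of (a), I will exploit the inclusion $J \subseteq I$, which gives $V(I) \subseteq V(J)$ and hence a natural transformation $\Gamma_I(-) \hookrightarrow \Gamma_J(-)$ of left exact functors. Passing to right derived functors yields a canonical $R$-linear map $f \colon H^c_I(M) \to H^c_J(M)$. Applying the contravariant functor $\Ext^c_R(-,M)$ then produces $f^{\ast} \colon \Ext^c_R(H^c_J(M),M) \to \Ext^c_R(H^c_I(M),M)$, and Proposition~\ref{31}(a), used with the pair $(J,H^c_J(M))$ and with $(I,H^c_I(M))$, identifies these two $\Ext$ modules with $\Hom_R(H^c_J(M),H^c_J(M))$ and $\Hom_R(H^c_I(M),H^c_I(M))$ respectively; the composite delivers the desired natural morphism.

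For (b) my strategy is to establish the stronger statement that $f$ itself is an isomorphism, from which the isomorphism of endomorphism rings is immediate. Fix a minimal injective resolution $E^{\cdot}_R(M)$, so
\[
E^{i}_R(M) \cong \bigoplus_{\mathfrak{p} \in \Supp_R(M)} E_R(R/\mathfrak{p})^{\mu_i(\mathfrak{p}, M)}.
\]
The inclusion of subcomplexes $\Gamma_I(E^{\cdot}_R(M)) \hookrightarrow \Gamma_J(E^{\cdot}_R(M))$ has cokernel complex $Q^{\cdot}$ with
\[
Q^i \cong \bigoplus_{\mathfrak{p} \in (V(J) \setminus V(I)) \cap \Supp_R(M)} E_R(R/\mathfrak{p})^{\mu_i(\mathfrak{p}, M)}.
\]
The key observation is that for any such $\mathfrak{p}$ one has $IR_{\mathfrak{p}} = R_{\mathfrak{p}}$ while $JR_{\mathfrak{p}} \subseteq \mathfrak{p}R_{\mathfrak{p}}$, so $\Rad IR_{\mathfrak{p}} \neq \Rad JR_{\mathfrak{p}}$; the contrapositive of the hypothesis then forces $\depth_{R_{\mathfrak{p}}}(M_{\mathfrak{p}}) > c$, whence the Bass numbers $\mu_i(\mathfrak{p}, M)$ vanish for all $i \leq c$. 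Hence $Q^i = 0$ for $i \leq c$, and the long exact cohomology sequence attached to $0 \to \Gamma_I(E^{\cdot}_R(M)) \to \Gamma_J(E^{\cdot}_R(M)) \to Q^{\cdot} \to 0$ collapses in degree $c$ to show $f \colon H^c_I(M) \to H^c_J(M)$ is an isomorphism.

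Part (c) then follows by transporting the morphism of (a) through the canonical isomorphisms of Lemma~\ref{3}(a); equivalently, since the hypothesis of (b) makes $f$ an isomorphism, the induced map $D(f) \colon D(H^c_J(M)) \to D(H^c_I(M))$ is also an isomorphism, which at once induces an isomorphism of the corresponding endomorphism rings of the Matlis duals.

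I expect the technical heart of the argument to lie entirely in the Bass-number vanishing $Q^i = 0$ for $i \leq c$: the hypothesis $\Rad IR_{\mathfrak{p}} = \Rad JR_{\mathfrak{p}}$ for primes $\mathfrak{p} \in V(J) \cap \Supp_R(M)$ with $\depth_{R_{\mathfrak{p}}}(M_{\mathfrak{p}}) \leq c$ is tailored exactly to suppress the contributions of the primes in $V(J) \setminus V(I)$ in low degrees, and everything else reduces to Proposition~\ref{31}, Lemma~\ref{3}, and routine long exact sequence manipulation.
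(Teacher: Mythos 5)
Your proof is correct, and part (b) uses a genuinely different technique from the paper. For the construction in (a), the paper takes the short exact sequences $0\to I^s/J^s\to R/J^s\to R/I^s\to 0$, applies $\Hom_R(\cdot,M)$, uses $\grade(J,M)=c$ to kill $\Ext^{c-1}_R(I^s/J^s,M)$, and passes to the direct limit; this yields the same injection $H^c_I(M)\hookrightarrow H^c_J(M)$ that you obtain more directly from the inclusion of functors $\Gamma_I\hookrightarrow\Gamma_J$, and both continue via $\Ext^c_R(\cdot,M)$ and Proposition~\ref{31}(a). For (b) the paper stays with that direct-limit exact sequence $0\to H^c_I(M)\to H^c_J(M)\to\varinjlim\Ext^c_R(I^s/J^s,M)$ and kills the third term by proving $\grade(J^s:_RI^s,M)\ge c+1$, which it deduces from the formula $\grade(\mathfrak a,M)=\inf\{\depth_{R_{\mathfrak p}}M_{\mathfrak p}:\mathfrak p\in V(\mathfrak a)\cap\Supp M\}$ and the radical hypothesis. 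You instead pass to the minimal injective resolution, observe that the cokernel complex of $\Gamma_I(E^\cdot)\hookrightarrow\Gamma_J(E^\cdot)$ in degree $i$ is a sum of $E_R(R/\mathfrak p)^{\mu_i(\mathfrak p,M)}$ over $\mathfrak p\in (V(J)\setminus V(I))\cap\Supp M$, and use the Bass-number vanishing $\mu_i(\mathfrak p,M)=0$ for $i<\depth_{R_{\mathfrak p}}M_{\mathfrak p}$ together with the contrapositive of the hypothesis to kill the quotient complex through degree $c$. Both routes invoke essentially the same local-depth information; yours is more geometric and makes the role of the Bass numbers transparent, while the paper's is more elementary-algebraic and avoids referring to the structure of minimal injective resolutions. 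Moreover, your argument yields the stronger intermediate fact that $H^c_I(M)\to H^c_J(M)$ is itself an isomorphism, which the paper also obtains but only implicitly. Part (c) is handled the same way in both.
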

\begin{proof}

Note that for each integer $s\geq 1$ there is a short exact sequence
\[
0\to I^s/J^s\to R/J^s\to R/I^s\to 0
\]
this is true because of $J\subset I$. Then the application of the functor $\Hom_{R}(\cdot, M)$ to this sequence induces the following exact sequence
\[
0\to \Ext^c_R(R/I^s, M)\to \Ext^c_R(R/J^s, M)\to \Ext^c_R(I^s/J^s,M)
\]
for all $s\geq 1$. Because of $\Supp_R(I^s/J^s) \subseteq V(J)$ it follows that $\Ext^{c-1}_R(I^s/J^s,M) = 0$ since $\grade (J,M) = c$ (see Proposition \ref{31}(a)).  Now pass to the direct limit of the last sequence we get that
\begin{equation}\label{ee}
0\to H^c_{I}(M)\to H^c_{J}(M)\mathop\to\limits^f \lim\limits_{\longrightarrow} \Ext^c_R(I^s/J^s,M).
\end{equation}
Then this induces the short exact sequence
\[
0\to H^c_{I}(M)\to H^c_{J}(M)\to N\to 0
\]
where $N:= \im f$. Again apply the functor $\Hom_{R}(\cdot, M)$ to this last sequence we get the following homomorphism
\[
\Ext^c_R(H^c_{J}(M), M)\to \Ext^c_R(H^c_{I}(M), M).
\]
Then from Proposition \ref{31} $(a)$ the statement $(a)$ is clear now. For the proof of $(b)$ note that without loss of generality we may assume that $IR_{\mathfrak{p}}= JR_{\mathfrak{p}}$ for all ${\mathfrak{p}}\in V(J)\cap \Supp_R(M)$ such that
\[
\depth_{R_\mathfrak{p}} (M_{\mathfrak{p}})\leq c.
\]
This is true because the local cohomology is independent up to the radical. We claim that by our assumption on depth we have $\Ext^c_R(I^s/J^s,M)= 0$ for all $s\geq 1$. Since $\Ann_R(I^s/J^s)= J^s:_RI^s$ it is enough to prove that $\grade(J^s:_R I^s,M)\geq c+1$ (see Proposition \ref{31}(a)). Suppose on contrary that $\grade (J^s:_R I^s,M)\leq c.$ It is well-known that
\[
\grade (J^s:_R I^s,M)= \inf \{\depth_{R_{\mathfrak{p}}} (M_{\mathfrak{p}}): {\mathfrak{p}}\in V(J^s:_R I^s)\cap \Supp_R(M)\}
\]
(see \cite[Proposition 1.2.10(a)]{her}). Then there exists a prime ideal
\[
{\mathfrak{p}}\in V(J^s:_R I^s)\cap \Supp_R(M)
\]
such that $\depth_{R_{\mathfrak{p}}} (M_{\mathfrak{p}})\leq c$. On the other side $\Supp_R(R/(J^s:_R I^s))=V(J^s:_R I^s)$ is contained in $V(J)$. It implies that ${\mathfrak{p}}\in V(J)\cap \Supp_R(M)$ and $R_{\mathfrak{p}}/(J_{\mathfrak{p}}^sR_{\mathfrak{p}}:_{R_{\mathfrak{p}}} I_{\mathfrak{p}}^sR_{\mathfrak{p}})\neq 0$ which is a contradiction to our assumption. So we have $\Ext^c_R(I^s/J^s,M)= 0$ for all $s\geq 1$. It implies that $H^c_{I}(M)\cong H^c_{J}(M)$ by virtue of the exact sequence \ref{ee}. Then the statement $(b)$ is easily followed.

To prove the last statement $(c)$ assume that $R$ is complete. Then by $(a)$ and Lemma \ref{3} $(a)$ there is a natural homomorphism
\[
\Hom_{R}(D(H^c_{J}(M)),D(H^c_{J}(M)))\to \Hom_{R}(D(H^c_{I}(M)),D(H^c_{I}(M))).
\]
Suppose that $J\subset I$ satisfy the assumption of $(b)$. For any $R$-module $X$ there is a natural injective homomorphism
\[
\Hom_{R}(X, X)\to \Hom_{R}(D(X),D(X)).
\]
Then consequently it induces the following commutative diagram
\[
\begin{array}{ccc}
   \Hom_{R}(H^c_{J}(M),H^c_{J}(M)) &  \to &  \Hom_{R}(H^c_{I}(M),H^c_{I}(M)) \\
  \downarrow &   & \downarrow \\
   \Hom_{R}(D(H^c_{J}(M)),D(H^c_{J}(M))) & \to & \Hom_{R}(D(H^c_{I}(M)),D(H^c_{I}(M)))
\end{array}
\]
But $\Hom_{R}(H^c_{J}(M),H^c_{J}(M))\to  \Hom_{R}(H^c_{I}(M),H^c_{I}(M))$ and both the vertical homomorphisms are isomorphisms (see $(b)$ and Lemma \ref{3} $(a)$). Then by commutativity of the above diagram it follows that the natural homomorphism
\[
\Hom_{R}(D(H^c_{J}(M)),D(H^c_{J}(M)))\to \Hom_{R}(D(H^c_{I}(M)),D(H^c_{I}(M)))
\]
is an isomorphism. This completes the proof of the Theorem.
\end{proof}
\section{Remarks}
In this section we will give some applications of our main results and we will compare the natural homomorphisms of Theorem \ref{5} and Proposition \ref{21.3}.
\begin{rem}\label{r11}
Let $R$ be a complete Cohen-Macaulay ring of dimension $n$ and $J\subset I$ be two ideals of $R$. Suppose that $\grade(I)=c= \grade(J)$. Then by Proposition \ref{21.3} there are the following commutative diagrams:
\[
\begin{array}{ccc}
  R &  \to & \Hom_{R}(H^c_{J}(K_R),H^c_{J}(K_R)) \\
  || &   & \downarrow \\
  R & \to & \Hom_{R}(H^c_{I}(K_R),H^c_{I}(K_R))
\end{array}
\]
\[
\begin{array}{ccc}
  R &  \to & \Hom_{R}(D(H^c_{J}(K_R)),D(H^c_{J}(K_R))) \\
  || &   & \downarrow \\
   R & \to & \Hom_{R}(D(H^c_{J}(K_R)),D(H^c_{J}(K_R)))
\end{array}
\]
Therefore under the assumption of Proposition \ref{21.3} $(b)$ in case of $M=R$ and from the second last commutative diagram we get that the natural homomorphism
\[
R \to \Hom_{R}(H^c_{J}(K_R),H^c_{J}(K_R))
\]
is an isomorphism if and only if the natural homomorphism
\[
R \to \Hom_{R}(H^c_{I}(K_R),H^c_{I}(K_R))
\]
is an isomorphism. Moreover this is equivalent to the fact that the natural homomorphism
\[
R \to \Hom_{R}(D(H^c_{J}(K_R)),D(H^c_{J}(K_R)))
\]
is an isomorphism if and only if the natural homomorphism
\[
R \to \Hom_{R}(D(H^c_{I}(K_R)),D(H^c_{I}(K_R)))
\]
is an isomorphism (by Lemma \ref{3}).

\end{rem}
\begin{cor}\label{}
Let $J\subset I$ be two ideals of a complete Cohen-Macaulay ring $(R,\mathfrak{m})$ such that $H^i_{J}(R)= 0$ for all $i\neq c= \grade(J,R)= \grade(I,R)$. Suppose that $\Rad IR_{\mathfrak{p}}= \Rad JR_{\mathfrak{p}}$ for all prime ideals ${\mathfrak{p}}\in V(J)$ with $\depth_{R_\mathfrak{p}} (R_\mathfrak{p})\leq c$. Then the natural homomorphism
\[
R \to \Hom_{R}(H^c_{I}(K_R),H^c_{I}(K_R))
\]
is an isomorphism.
\end{cor}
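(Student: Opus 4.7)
The plan is to deduce the corollary by combining Theorem \ref{5}(a), applied to the ideal $J$, with Proposition \ref{21.3}(b), applied to the module $M = K_R$. Concretely, the hypothesis $H^i_J(R) = 0$ for $i \neq c$ means $J$ is a cohomologically complete intersection, so Theorem \ref{5}(a) immediately gives that the natural map
\[
R \to \Hom_R(H^c_J(K_R), H^c_J(K_R))
\]
is an isomorphism.

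Next, I would verify that Proposition \ref{21.3}(b) applies with $M = K_R$. The proof of Theorem \ref{5} already records that $\grade(I, K_R) = \grade(J, K_R) = c$ under the present hypotheses. Since $R$ is Cohen-Macaulay and complete, $K_R$ is a maximal Cohen-Macaulay module with $\Supp_R(K_R) = \Spec(R)$, and for every prime ideal $\mathfrak{p}$ one has $\depth_{R_\mathfrak{p}}(K_R)_\mathfrak{p} = \depth_{R_\mathfrak{p}}(R_\mathfrak{p})$. Therefore the set of primes $\mathfrak{p} \in V(J) \cap \Supp_R(K_R)$ with $\depth_{R_\mathfrak{p}}(K_R)_\mathfrak{p} \leq c$ coincides with the set of primes $\mathfrak{p} \in V(J)$ with $\depth_{R_\mathfrak{p}}(R_\mathfrak{p}) \leq c$. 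Thus the radical condition in the hypothesis of the corollary is exactly what is required to invoke Proposition \ref{21.3}(b) for $M = K_R$, giving that
\[
\Hom_R(H^c_J(K_R), H^c_J(K_R)) \to \Hom_R(H^c_I(K_R), H^c_I(K_R))
\]
is an isomorphism.

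Finally, I would close the argument by appealing to the commutative diagram of Remark \ref{r11}: the top arrow is an isomorphism by Theorem \ref{5}(a), the right vertical arrow is an isomorphism by the previous step, and the left vertical arrow is the identity, so by commutativity the bottom arrow $R \to \Hom_R(H^c_I(K_R), H^c_I(K_R))$ is an isomorphism as well. The only substantive thing to check is the translation of the depth hypothesis from $R$ to $K_R$; once the support and depth equalities for $K_R$ are observed, everything else is a direct application of previously established results, so I do not anticipate any serious obstacle.
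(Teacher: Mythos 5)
Your proposal is correct and follows essentially the same route as the paper: Theorem~\ref{5} applied to the cohomologically complete intersection ideal $J$ gives the isomorphism for $J$, and Proposition~\ref{21.3}(b) with $M=K_R$ together with the commutative diagram of Remark~\ref{r11} transfers it to $I$. The one thing you do that the paper leaves implicit is to justify why the hypothesis stated for $\depth_{R_\mathfrak{p}}(R_\mathfrak{p})$ is the correct one for invoking Proposition~\ref{21.3}(b) with $M=K_R$ — namely that $K_R$ is maximal Cohen--Macaulay with $\Supp_R(K_R)=\Spec R$ and $\depth_{R_\mathfrak{p}}(K_R)_\mathfrak{p}=\depth R_\mathfrak{p}$ for all $\mathfrak{p}$ — which is a welcome clarification of the remark's somewhat terse phrasing.
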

\begin{proof}
It follows from Theorem \ref{5} $(c)$ and the above Remark \ref{r11}.
\end{proof}
\noindent\textbf{Acknowledgement.} The author is grateful to the reviewer for suggestions to
improve the manuscript.


\end{document}